\numberwithin{equation}{section}
\newtheorem{theorem}{Theorem}[section]
\newtheorem{lemma}[theorem]{Lemma}
\theoremstyle{remark}
\newtheorem{remark}[theorem]{Remark}
\newcommand{\mc}[1]{{\mathcal #1}}
\newcommand{\bb}[1]{{\mathbb #1}}
\definecolor{Red}{cmyk}{0,1,1,0}
\title{Absorbing-state phase transition and activated random walks with unbounded capacities}
\author{Leandro Chiarini\footnote{l.chiarinimedeiros@uu.nl; Utrecht University,Department of Mathematics, Budapestlaan 6, 3584 CD Utrecht, The Netherlands.}
  \and Alexandre Stauffer\footnote{astauffer@mat.uniroma3.it; Universit\`a Roma Tre, Dip.\ di Matematica e Fisica, Largo S.\ Murialdo 1, 00146, Rome, Italy; University of Bath, Dept.\ of Mathematical Sciences, BA2 7AY Bath, UK, supported by EPSRC Fellowship EP/N004566/1.}}
\begin{document}

\maketitle
\begin{abstract}
In this article, we study the existence of an absorbing-state phase transition of an Abelian process that generalises the Activated Random Walk (ARW).
Given a vertex transitive $G=(V,E)$, we associate to each site $x \in V$ a capacity  $w_x \ge 0$, which describes how many inactive particles $x$ can hold, where $\{w_x\}_{x \in V}$  is a collection of i.i.d random variables.
 When $G$ is an amenable graph, we prove that if $\bb E[w_x]<\infty$,  the model goes through an absorbing-state phase transition  and if $\bb E[w_x]=\infty$, the model fixates for all $\lambda>0$.
Moreover, in the former case, we provide bounds for the critical density that match the ones available in the classical Activated Random Walk.
\end{abstract}

\section{Introduction}
\label{sec:intro}

In this article, we study a generalised version of the Activated Random Walks (ARW) model, which was introduced in \cite{rolla2012absorbing}.
 The original ARW can be described as it follows: we sample a collection of randomly placed particles in a graph with density $\mu$.
Each of these particles evolves independently according to a continuous time random walk of rate $1$.
Moreover, each particle has an independent Poisson clock of rate $\lambda$  so that, when such a clock rings, the particle becomes inactive.
 If inactive and active particles share the same site, the inactive particle instantly become active.

The ARW has been extensively studied in a range of contexts, in particular in establishing the so-called \textit{absorbing-state phase transition} \cite{basu2019activated,klivans2018mathematics, stauffer2018critical,taggi2016absorbing, taggi2019active}.
Moreover, it falls in a wider class of particle systems called \textit{Abelian Networks}.
For such class of systems, the order of the rings of underlying Poisson clocks used to define the dynamics is to some extent irrelevant.
General properties of such class can be found in the series of papers \cite{bond2016abelian1, bond2016abelian2,bond2016abelian3}.
Besides the ARW, famous examples of Abelian Networks include the stochastic sandpile, bootstrap percolation, chip-firing dynamics,  oil-water dynamics,  and rotor-router models.

In the present article, we will study the following model.
Given an infinite and vertex transitive graph $G=(V,E)$, we introduce a random environment $w = \{w_x\}_{x \in V}$ given by a collection of random variables sampled independently according to a common measure $\nu$ supported in $\bb N:=\{0,1,\dots\}$.
Then, we sample independently an initial  configuration of active particles $\{\eta^a_0(x)\}_{x \in V}$ according to a Poisson measure of parameter $\mu$.
Again, each of these particles evolves independently according to a continuous time random walk of rate $1$.
Likewise, each particle has an independent Poisson clock of rate $\lambda$  so that, when such a clock rings, the particle becomes inactive.
However, in this model, whenever active and inactive particles share the same site $x$ all of them immediately become inactive if the number of particles at $x$ is smaller or equal to the site capacity $w_x$; otherwise, all of them immediately become active.
Notice that the classical ARW corresponds to the case $w_x = 1$ for all $x\in V$.
 Notice also that this model allows the case $w_x=0$, therefore there is no stochastic dominance between the system discussed here and the original ARW.

The reason to introduce such a model comes from one of the main characteristics of Abelian models, the aforementioned absorbing-state phase transition: a transition between a regime of \textit{fixation} and a regime of \textit{activity}.
We say that a model fixates if the configuration restricted to any finite  subgraph eventually becomes constant in time; otherwise, we say it stays active.
Certain models display a phase transition in the initial density $\mu$: for sufficiently small density $\mu$ the system fixates, but remains active for $\mu$ sufficiently large.
Both the ARW and the stochastic sandpile display phase transition in $\mu$; however, the oil-water dynamics does not.

An important property that distinguishes the ARW and the stochastic sandpile from the oil-water dynamics is that the former two models can only hold a (deterministic) maximum amount of particles at each site before becoming unstable.
Here, we will informally refer to this property as \textit{finite capacity}.
On the other hand, the oil-water dynamics can hold arbitrarily many particles at a stable vertex, hence, we say that it has \textit{infinite capacity}.
It is natural to wonder how connected the existence of absorbing-state phase transition in an Abelian model is to the finiteness of its capacity.

The activated random walks in random environments (ARWRE) studied in the present article interpolates between those two scenarios.
As each site $x$ is able to hold $w_x$ of inactive particles before becoming unstable, the model still has finite capacity in some sense.
However, the spatial averages of $w_x$ may converge or diverge depending on the existence of the first moment of $w_x$.
Therefore two natural questions arise, \textit{is it possible for a abelian process to have a absorbing-state phase transition with infinite capacity? Does finite capacity imply absorbing-state phase transition?} 

In the following, we show that the only important parameter for the existence of an absorbing-state phase transition in the ARWRE is the finiteness of the expected value of $w_x$.
 It is not clear whether the exact law of $w_x$ is relevant to determine the exact value of the critical density.

Moreover, it is interesting to point out that we do not take into consideration any geometrical information about the environment.
For instance, we still have no phase transition if $w$ is heavy-tailed but the site percolation given by ${\bf 1}_{[w_x=0]}$ is supercritical.

To state our main result, consider ${\bb P}^\nu_{\mu,\lambda}$ to
be the law of the whole process. 

\begin{theorem} \label{thm:main}
    Given an infinite, vertex transitive, and amenable graph $G=(V,E)$, 
    it follows that
    \begin{itemize}
      \item For all $\mu < \frac{\lambda}{1+\lambda}\bb E[w_o]$, we have
    ${\bb P}^\nu_{\mu,\lambda}\left( \text{The process } \eta_\cdot \text{ fixates} \right)=1$.
      \item For  $\mu > {\bb E} [w_o]$, we have
    ${\bb P}^\nu_{\mu,\lambda}\left(\text{The process } \eta_\cdot \text{ fixates} \right)=0$.
    \end{itemize}
    
    In particular, if $\bb E[w_0]=\infty$, the system fixates for all $\mu<\infty$
    and $\lambda>0$ with probability $1$. 
\end{theorem}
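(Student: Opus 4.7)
The plan is to treat the two inequalities separately. For the activity direction $\mu > \bb E[w_o]$ I would use a mass-balance argument. Fix a F{\o}lner sequence $\Lambda_n\uparrow V$ (which exists because $G$ is amenable). The strong law of large numbers gives $W_{\Lambda_n}/|\Lambda_n|\to \bb E[w_o]$ almost surely, where $W_{\Lambda_n}:=\sum_{x\in\Lambda_n} w_x$ is the total capacity of $\Lambda_n$, and the initial Poisson data satisfies $M_n/|\Lambda_n|\to\mu$ almost surely. If the system fixated, then particle conservation, ergodicity of the initial condition and of the environment, and the vanishing surface-to-volume ratio of $\Lambda_n$ would force the density of sleeping particles in the limiting configuration to equal $\mu$; but that density is at most $\bb E[w_o]$ by the capacity constraint, contradicting $\mu>\bb E[w_o]$. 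This is an adaptation of the standard Rolla--Sidoravicius mass-balance argument to the random environment, which is essentially cosmetic because $w$ is stationary and independent of the initial configuration.

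For the fixation direction $\mu < \tfrac{\lambda}{1+\lambda}\bb E[w_o]$, I would use the Diaconis--Fulton site-wise representation: at each $x$ place an iid stack of instructions, each independently of type \emph{sleep} with probability $\tfrac{\lambda}{1+\lambda}$ or \emph{jump to a uniformly chosen neighbour} with probability $\tfrac{1}{1+\lambda}$. The abelian property allows one to settle the particles one at a time. The extra factor $\lambda/(1+\lambda)$ in the bound arises because when a wandering particle tries to settle at a site, each instruction it encounters is a sleep attempt with probability only $\lambda/(1+\lambda)$, so on average the particle consumes $(1+\lambda)/\lambda$ instructions per successful sleep. Adapting the arguments of \cite{stauffer2018critical,taggi2019active} to random capacities, one shows that the particle-by-particle procedure stabilises $\Lambda_n$ with overwhelming probability whenever $\mu|\Lambda_n|\cdot \tfrac{1+\lambda}{\lambda}<W_{\Lambda_n}$, i.e.\ precisely when $\mu<\tfrac{\lambda}{1+\lambda}\bb E[w_o]$. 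A Borel--Cantelli argument along the F{\o}lner sequence then transfers this to fixation of the infinite system.

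The hardest step is the fixation argument, where a walker's trajectory is correlated with the capacities it encounters, and the law of $w$ may be heavy-tailed. I would handle the heavy-tail issue by first truncating, replacing $w_x$ by $w_x\wedge K$; for each fixed $K$ the classical techniques apply cleanly to the bounded-capacity model. Monotonicity in the environment (enlarging $w_x$ can only help fixation) then allows me to pass to $K\to\infty$ via monotone convergence $\bb E[w_o\wedge K]\uparrow\bb E[w_o]$. A secondary subtlety is that in our model a site may lose \emph{all} of its $w_x$ sleeping particles at once when an active particle arrives and the total exceeds the capacity; this ``collapse'' requires careful bookkeeping in the instruction-based count, but can be absorbed into a recursion on the number of such events, which is small in the subcritical regime.
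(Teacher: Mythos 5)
Your overall architecture (two separate inequalities, mass balance for activity, Diaconis--Fulton counting for fixation) matches the paper's, but both halves as sketched have real gaps, and the fixation half is missing the key idea.

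For the activity direction ($\mu>\bb E[w_o]$): the inequality you want is exactly the paper's (conserved mass $\mu$ versus sleeping density at most $\bb E[w_o]$), but the step ``fixation forces the density of sleeping particles in $\Lambda_n$ to equal $\mu$'' is not a consequence of F{\o}lner averaging alone. Site fixation does not by itself prevent a macroscopic net flux of particles across $\partial\Lambda_n$ before the configuration freezes, so you cannot equate the initial and final particle counts in $\Lambda_n$ up to boundary errors without further input. The paper closes this by invoking the theorem of Amir and Gurel-Gurevich that site fixation implies \emph{particle} fixation (so each particle has a well-defined final position), and then applies the mass-transport principle to $F(x,y)=\bb E[X_{x,y}]$, using $\sum_y X_{x,y}=\eta^a_0(x)$ and $\sum_y X_{y,x}\le w_x$. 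You need this (or an equivalent flux control) explicitly; the one-dimensional Rolla--Sidoravicius crossing argument you allude to does not transfer to a general amenable transitive graph.

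For the fixation direction ($\mu<\frac{\lambda}{1+\lambda}\bb E[w_o]$): the sentence ``one shows that the particle-by-particle procedure stabilises $\Lambda_n$ with overwhelming probability whenever $\mu|\Lambda_n|\cdot\frac{1+\lambda}{\lambda}<W_{\Lambda_n}$'' \emph{is} the theorem, and the heuristic offered for it (each particle consumes $(1+\lambda)/\lambda$ instructions per successful sleep) ignores the central difficulty: sleeping particles are re-awakened, and in this model a site that holds $w_y$ sleepers loses all of them at once when a further particle arrives. Your plan to absorb these collapse events ``into a recursion'' is precisely the point at which the argument must be supplied, and no known proof of the $\frac{\lambda}{1+\lambda}$ bound on general transitive amenable graphs proceeds by direct settling. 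The paper instead argues by contradiction: assuming activity, it shows via \emph{weak stabilisation} that for large $r$ the ball $B(x,r)$ saturates $x$ with exactly $w_x$ active particles with probability close to $1$ (this step uses activity, i.e.\ $m_V(x)=\infty$, in an essential way), and that with probability $\frac{\lambda}{1+\lambda}$ the next instruction at $x$ then freezes all $w_x$ of them for the remainder of the stabilisation; summing over a F{\o}lner set forces the sleeping density, hence $\mu$, to be at least $\frac{\lambda}{1+\lambda}\bb E[w_o]$. Your sketch contains no substitute for this saturation mechanism. Two smaller points: the truncation $w_x\mapsto w_x\wedge K$ is a legitimate monotone reduction (larger capacities only reduce topplings) and handles $\bb E[w_o]=\infty$ cleanly, but it does not land you on a ``classical'' model --- bounded i.i.d.\ capacities, possibly equal to $0$, are not the ARW and are not stochastically comparable to it, so the full argument is still needed for the truncated model; and the paper avoids truncation altogether by applying monotone convergence directly to $\bb E\big[w_x\,{\bf 1}[S(x,B(x,r))]\big]$ when choosing $r$.
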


The upper bound is a very direct application of the mass-transport
techniques used in \cite{gurel2010fixation}. The lower bound comes 
from an adaptation of the concept of \textit{stabilisation via weak stabilisation} 
developed in \cite{stauffer2018critical}.

\section{Definitions and classical results}
\label{sec:defs}

Let $G=(V,E)$ be a vertex transitive and amenable graph with $d$ the common degree of its vertices.
For convenience, we will choose a vertex $o$ to call its origin.
We also will denote by $B(x,r)$ the ball of center in $x$ and radius $r$ in the graph distance.
We also write $x \sim y$ to denote that $x,y$ are neighbours in $G$.
Consider the state space
\[
\Omega
:=
\Big[
  \eta \in \bb N^V \times \bb N^V: \{x : \eta^a(x) \ge 1\}\cap \{x: \eta^i(x)\ge 1\}= \emptyset
\Big],
\]
where for any $\eta \in \Omega$, we denote
\[
  \eta=\left\{\left(\eta^{a}(x),\eta^{i}(x)\right) \right\}_{x \in V}.
\]
Here $\eta^a(x)$ denotes the number of active particles on the site $x $ and $\eta^i(x)$ is the number of inactive particles on the site $x$.

We consider the random environment.
That is, we sample an i.i.d collection $w=\{w_x\}_{x \in V} \in \bb N^V$ which represents the capacity of each site.
That is, how many, inactive particles can the site $x$ hold.
We will denote the law of the random variables $w_x$ by $\nu$.

Consider the collection of probability measures $({\bb P}_\mu)_{\mu >0}$ on $\Omega$ according to which the random variables $\eta^i(x)=0$ a.s for any $\mu>0$ and $\eta^a(x)$ are i.i.d Poisson random variables of parameter $\mu$.
This measure will be used to sample the initial condition of the system.

We say that a configuration is \textit{unstable} at the site $x$ if $\eta^a(x)>0$.
If the configuration $\eta$ is unstable at the site $x$, we define the \textit{sleep operator} $\tau^{x,s}$ which orders that all particles at the site $x$ sleep (or to turn inactive) at once, as long as $x$ has enough capacity to hold that many inactive particles.
Otherwise, all the particles on $x$ remain active.
More precisely, we define  
\[
  (\tau^{x,s} \eta)(z)
  =
  \begin{cases}
    (\eta^a(x),0) & \text{ if } z=x,\eta^a_x > w_x, \\
    (0,\eta^a(x)) & \text{ if } z=x,\eta^a(x) \le w_x, \\
    (\eta^a(z),\eta^i(z)) & \text{ if } z\neq x.
  \end{cases}
\]

If the configuration $\eta$ is unstable at the site $x$, we also define the \textit{jump operators} $\tau^{x,y}$ for $y \sim x$, which sends a particle from $x$  to $y$.
If $y$ already contained inactive particles, the new particle also becomes inactive.
At this point, if the number of inactive particles at $y$ surpasses the value of $w_y$ all of its particles will immediately turn active.
More precisely, we define $(\tau^{x,y} \eta)(z)$ to be 
\[
  (\tau^{x,y}\eta)(z):=
  \begin{cases}
    (\eta^a(x)-1,0) & \text{ if } z=x \\
    (\eta^a( y )+1,0) & \text{ if } z=y,\eta^i(y)=0, \\
    (0,\eta^i( y )+1) & \text{ if } z=y,\eta^i(y) \in \{1,\dots,w_{y}-1\}, \\
    (\eta^i( y )+1,0) & \text{ if } z=y,\eta^i(y)\ge w_{y}, \\
    (\eta^a(z),\eta^i(z)) & \text{ if } z\neq x,y.
  \end{cases}
\]

For $\lambda>0$, the evolution of the system is defined as the continuous time Markov chain given in the following.
A configuration $\eta$ transitions to the state $\tau^{x,y} \eta$ at rate $\eta^a(x)/d$ for each $y\sim x$, and transitions to the state $\tau^{x,s}\eta$ with rate $\lambda \eta^a(x)$.
 We will denote the law of the process $\eta_t$ by $\bb P^\nu_{\mu,\lambda}$. 

We will use the so-called \textit{Diaconis-Fulton representation}.
To construct it, we start with a fixed set of instructions 
$\tau \in \mc T$, where 
\[
  \mc T:=
  \left\{\tau^{x}_j \in \{\tau^{x,s},\tau^{x,y}, y \sim x\} ,x \in V, j\ge 1\right\}.
\]
In order to apply the instructions given by a stack $\tau \in \mc T$ at
a specific site $x \in V$ (which will also refer as \emph{toppling the 
site $x$}), we introduce a function which counts
how many times each site was toppled. With this purpose, let 
${\bf h}:=\{h_x\}_{x\in V}$ with $h_x \ge 0$.  
Then add ${\bf h}$ to the state of the system
and extend the action of $\tau^{x}_j$ to $(\eta,{\bf h})$ at the site $x$ as
\begin{equation*}\label{eq:def-top}
	\Phi_x\left(\eta, {\bf h}\right) 
	:=
	(\tau^{x}_{h_x+1}, {\bf h} + \delta_x),
\end{equation*}
where $\delta_{x,y}=1$ if $x=y$ and $0$ otherwise.
We will often simply write $\Phi_\alpha(\eta)$ to denote 
$\Phi_\alpha(\eta,0)$.

Notice that $\Phi_x$ is only well-defined if $\eta$ is unstable at $x$, in which case we refer to the map $\Phi_x$ being a \textit{legal toppling} for the configuration $(\eta, {\bf h})$.
 Moreover, for a sequence $\alpha=(x_1,\dots,x_l)$ we define a toppling sequence $\Phi_\alpha = \Phi_{x_l}\circ \dots \circ \Phi_{x_1}$ if for any $i \in \{1,\dots,l\}$, $\Phi_{x_i}$ is legal for the configuration $\Phi_{x_{i-1}}\circ \dots \circ \Phi_{x_1}(\eta,{\bf h})$, in which case we say that the sequence $\alpha$ is legal.
If either a toppling or a toppling sequence is not legal, we say it is \textit{illegal}.
We will also keep track of how many times a toppling sequence $\alpha=(x_1,\dots,x_l)$ used instructions of each stack, for this, we define
\[
  m_{\alpha}(x) := \sum_{j=1}^l {\bf 1}[x_j=x].
\]

Given a set $K \subset V$, we say that \textit{$\eta$ is stable in $K$}  if $\eta$ has no unstable sites in $K$.
We then extend this notion to toppling sequences, we say that a sequence \textit{$\alpha$ stabilises  $\eta$}  in $K$ if $\alpha$ is legal and $\Phi_\alpha \eta$ is stable in $K$.

For the sake of conciseness, we will only quickly state some standard properties of the model, that is, Abelian Property, monotonicity, and Zero-One law.
The proofs of the first two properties can be adapted from their counterparts in \cite{rolla2012absorbing} without significant changes.
The proof of the Zero-One Law only requires a small observation, which is given in Remark~\ref{rem:zero-one}.

Given a fixed stack of instructions $\tau$, and a finite set $K \subset V$, we can define a function $m_{K}=m^w_{K,\eta,\tau}$ to be equal to the number of times we use instructions at each site $x \in V$ until we stabilise $V$ conditioned on the environment $w$.
Lemma~\ref{prop:ab} guarantees that such function is well-defined.

\begin{lemma}[Abelian property]\label{prop:ab}
  Let $\alpha$ and $\beta$ be two legal sequences for $\eta$ such that $m_\alpha = m_\beta$,
  then we have that $\Phi_\alpha \eta \equiv \Phi_\beta\eta$. 
  Moreover, if $\alpha$ and $\beta$ are two legal toppling sequences 
  contained in $K \subset V$ and both stabilise $\eta$ in $K$, 
  then $m_\alpha= m_\beta$. 
\end{lemma}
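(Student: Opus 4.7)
The plan is to adapt the standard Diaconis--Fulton argument used by Rolla and Sidoravicius in \cite{rolla2012absorbing}, since the random capacities $\{w_x\}$ only affect the internal rule at a single site and never couple the dynamics at distinct vertices, so all commutation can be established locally. The backbone of the argument is a \emph{local commutativity} statement: if $\Phi_x$ and $\Phi_y$ are both legal for $(\eta,\mathbf{h})$ with $x\neq y$, then $\Phi_x\Phi_y(\eta,\mathbf{h})=\Phi_y\Phi_x(\eta,\mathbf{h})$. I would establish this by case analysis on whether each instruction is a sleep or a jump and whether $x\sim y$. Two elementary observations make the check routine: (a) applying $\Phi_x$ never decreases $\eta^a(z)$ for any $z\neq x$ (the potentially decreasing middle case $\eta^i(z)\in\{1,\dots,w_z-1\}$ of the jump rule only occurs when already $\eta^a(z)=0$, by the disjointness constraint $\{\eta^a\ge 1\}\cap\{\eta^i\ge 1\}=\emptyset$ defining $\Omega$), so $\Phi_y$ remains legal after $\Phi_x$; and (b) for $z\neq x$ currently active, $\eta^i(z)=0$, so a jump from $x$ to $z$ simply increments $\eta^a(z)$. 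The only genuinely new subcase relative to the classical ARW is the interaction of a sleep at $y$ with a jump from $x$ into $y$: since both orders produce the same total particle count at $y$ and the same comparison of that count against $w_y$, both yield the same pair $(\eta^a(y),\eta^i(y))$.

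For the first assertion, I would induct on the common length $\ell:=\sum_x m_\alpha(x)$. The empty case is trivial. For the inductive step, let $x$ be the first letter of $\alpha$; since $m_\beta(x)=m_\alpha(x)\ge 1$, the letter $x$ appears in $\beta$, and I take $j$ to be the smallest position at which it occurs. By observation (a), $\eta^a(x)$ remains positive throughout the prefix $(y_1,\dots,y_{j-1})$ of $\beta$, so $\Phi_x$ is legal at each intermediate step, and local commutativity lets me move the occurrence of $x$ to the front of $\beta$ by $j-1$ adjacent swaps, producing a legal sequence $\beta'$ with $\Phi_{\beta'}\eta=\Phi_\beta\eta$ and $m_{\beta'}=m_\beta$. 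Stripping the common first letter from $\alpha$ and $\beta'$ (both apply the same instruction $\tau^x_{h_x+1}$ to the same configuration) leaves two legal sequences of length $\ell-1$ with equal odometer, and the induction closes.

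For the second assertion I would establish the standard \emph{least action principle}: if $\sigma\subset K$ is any legal sequence and $\rho\subset K$ legally stabilises $\eta$ in $K$, then $m_\sigma\le m_\rho$ pointwise. Again one inducts on the length of $\sigma$: writing $\sigma=(x_1,\sigma')$, the same commutation argument rearranges $\rho$ into a legal stabilising sequence that starts with $x_1$, without changing $m_\rho$ (the letter $x_1$ must appear in $\rho$, since otherwise by (a) the configuration $\Phi_\rho\eta$ would still be unstable at $x_1$). Stripping that first letter yields $\rho'$ stabilising $\Phi_{x_1}\eta$ in $K$ and $\sigma'$ legal for $\Phi_{x_1}\eta$, so the induction hypothesis gives $m_{\sigma'}\le m_{\rho'}$, whence adding $\delta_{x_1}$ back produces $m_\sigma\le m_\rho$. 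Applying the principle with $(\sigma,\rho)=(\alpha,\beta)$ and $(\beta,\alpha)$ gives $m_\alpha=m_\beta$. The main (and only new) obstacle relative to the classical proof is the case enumeration behind local commutativity when the threshold $w_y$ interacts with a sleep instruction, which is a short finite check; the remainder is a literal transcription of the Diaconis--Fulton framework.
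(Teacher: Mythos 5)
Your proposal is correct and follows exactly the route the paper intends: the paper omits the proof and states that it "can be adapted from \cite{rolla2012absorbing} without significant changes," and your write-up is precisely that adaptation (local commutativity plus induction for the first claim, the least action principle for the second), correctly isolating the only new verifications — that the middle case $\eta^i(z)\in\{1,\dots,w_z-1\}$ of the jump rule cannot decrease $\eta^a(z)$ because the definition of $\Omega$ forces $\eta^a(z)=0$ there, and that a sleep at $y$ commutes with a jump into $y$ because both orders end by comparing the same total count at $y$ against $w_y$. No gaps.
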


\begin{remark} \label{rem:def-dynamics}
Our choice of dynamics might appear an unnatural a way of generalising the ARW dynamics.
One could feel tempted to changing the jumping operators so that whenever an active particle arrives at a site, it activates all the inactive particles therein regardless of how many particles where present there.
However, such dynamics do not satisfy the Abelian property.
See Figure~\ref{fig:counter-ex} below for an example.
\end{remark}

\begin{figure}[ht!]
    \centering
    \includegraphics[scale=0.5]{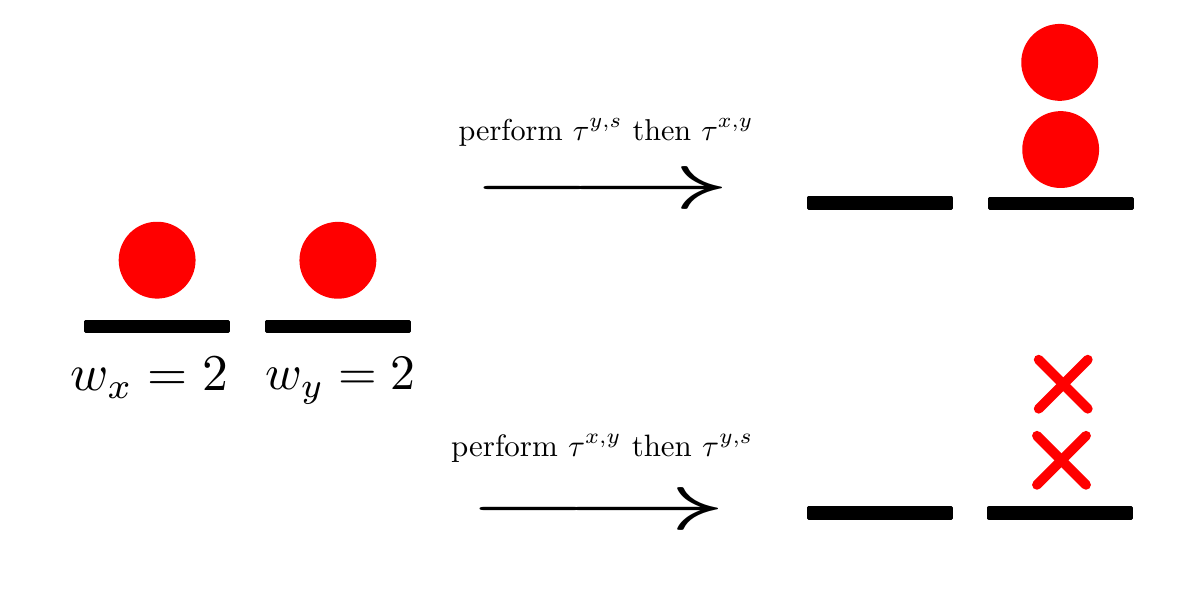}
    \caption{ An example of the non-Abealianess of the dynamics described in Remark~\ref{rem:def-dynamics}. The red balls
    represent active particles and the red crosses represent inactive particles.}
  \label{fig:counter-ex}
\end{figure}
Given two configurations 
$\eta,\tilde{\eta} \in \Omega$,
we write $\eta \le {\tilde{\eta}}$ to denote that for every
$x \in V$, we have 
\[
   \eta^a(x) \le {\tilde{\eta}}^a(x)
  \quad
  \text{ and }
  \quad
	\eta^a(x) + \eta^i(x)  
	\le
	\tilde{\eta}^a(x) + \tilde{\eta}^i(x).
\]
We also use $w \ge \tilde{w}$ to denote 
$w_x\ge \tilde{w}_x$ for every $x \in V$.
\begin{lemma}[Monotonicity] \label{prop:mono}
  Let $K_1 \subset K_2\subset V$ finite, $w \ge \tilde{w}$, and 
  $\eta$ and $\tilde{\eta}$ be two configurations such that $\eta \le \tilde{\eta}$,
  we have that 
  $
    m^w_{K_1,\eta,\tau}
    \le 
    m^{\tilde{w}}_{K_2,\tilde{\eta},\tau}.
    $
\end{lemma}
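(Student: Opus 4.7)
The plan is to adapt the classical monotonicity argument for the ARW by combining the Abelian property of Lemma~\ref{prop:ab} with a pointwise comparison of topplings. The main engine is a single-step monotonicity statement, after which the whole result follows from an extend-and-Abelianise argument.

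First, I would establish a single-toppling monotonicity: if $\eta\le\eta'$ and $w\ge w'$, then for any site $x$ at which $\Phi_x$ is a legal toppling in the $\eta$-copy under $w$, the same instruction is also legal in the $\eta'$-copy under $w'$, and the resulting configurations satisfy $\Phi_x\eta\le\Phi_x\eta'$. Legality follows from the ordering since the definition of $\le$ gives $\eta^a(x)\le\eta'^a(x)$, so $\eta^a(x)>0$ forces $\eta'^a(x)>0$. Order preservation proceeds by case analysis over the two forms of an instruction: for the sleep operator $\tau^{x,s}$ one uses $w_x\ge w'_x$ to rule out the reversing configuration in which the $\eta$-copy puts its active particles to sleep while the $\eta'$-copy keeps them active; for the jump operator $\tau^{x,y}$ one performs a case analysis at the destination $y$ according to whether $\eta^i(y)$ is strictly below $w_y$, equal to $w_y$, or already in the reactivation regime, and checks each against the three analogous regimes for $\eta'^i(y)$ and $w'_y$.

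Second, let $\alpha=(x_1,\dots,x_l)$ be any legal toppling sequence stabilising $(\eta,w)$ in $K_1$; by Lemma~\ref{prop:ab}, $m_\alpha=m^w_{K_1,\eta,\tau}$. Iterating the single-toppling monotonicity along $\alpha$, the sequence $\alpha$ is also legal for $(\eta',w')$, and at every intermediate step the $\eta$-copy remains $\le$ the $\eta'$-copy. Since $K_1\subseteq K_2$ and the primed configuration carries at least as many particles, $\alpha$ need not stabilise $\eta'$ in $K_2$; I would then append further legal topplings at the remaining unstable sites of $K_2$ to obtain a legal stabilising sequence $\beta$ with $m_\beta(x)\ge m_\alpha(x)$ for every $x\in V$. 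Invoking Lemma~\ref{prop:ab} once more for the primed system yields $m_\beta=m^{w'}_{K_2,\eta',\tau}$, and the desired inequality $m^w_{K_1,\eta,\tau}\le m^{w'}_{K_2,\eta',\tau}$ follows.

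The difficulty is concentrated entirely in the case analysis of the first step, specifically the jump operator at a destination $y$ sitting at the reactivation boundary in the $\eta$-copy, that is, $\eta^i(y)=w_y$. Here the incoming particle causes all particles at $y$ to reactivate in the $\eta$-copy, whereas the $\eta'$-copy, operating under the smaller capacity $w'_y$, may already have had its inactive particles reactivated earlier, or may instead absorb the new particle silently. One has to verify in each of these sub-cases that the pointwise ordering between the two copies is preserved despite the asymmetric behaviour of the reactivation rule; this is the only place where both hypotheses $\eta\le\eta'$ and $w\ge w'$ are genuinely used simultaneously.
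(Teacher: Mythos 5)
Your overall architecture (single-toppling order preservation combined with the Abelian property of Lemma~\ref{prop:ab}, then extending the stabilising sequence from $K_1$ to $K_2$) is the standard route for this lemma, and the paper itself omits the proof, deferring to \cite{rolla2012absorbing}. However, the single-toppling monotonicity you assert in the first step is false for this model, and the failure is not in the sub-case you single out ($\eta^i(y)=w_y$, which together with the sleep operator is indeed handled by $\eta^a(x)\le\eta'^a(x)$ and $w_x\ge w'_x$) but in two others. First, take $y$ empty in the $\eta$-copy, $(\eta^a(y),\eta^i(y))=(0,0)$, and $(\eta'^a(y),\eta'^i(y))=(0,j)$ with $1\le j\le w'_y-1$; then $\eta(y)\le\eta'(y)$, but after $\tau^{x,y}$ the $\eta$-copy holds $(1,0)$ (a particle arriving at an empty site stays active) while the $\eta'$-copy holds $(0,j+1)$ (the particle is silently absorbed), and $(1,0)>(0,j+1)$ in the lexicographic order. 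Second, take $(0,w_y)$ versus $(1,0)$ with $w_y\ge2$: the jump produces $(w_y+1,0)$ versus $(2,0)$, again reversing the order. Both mechanisms are invisible in the classical ARW ($w\equiv1$), which is why the textbook case analysis appears routine; they are genuinely new features of capacities $w_y\ge2$.

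Moreover, this is not a gap you can close by a finer case analysis: the first mechanism already violates the conclusion of the lemma as stated. Take $K_1=K_2=\{0,1\}$, $w=w'$ with $w_1=5$, let $\eta$ have one active particle at $0$ and site $1$ empty, let $\eta'$ have one active particle at $0$ and two inactive particles at $1$ (so $\eta\le\eta'$), and let the first instruction at $0$ be $\tau^{0,1}$ and at $1$ be $\tau^{1,s}$. Stabilising $\eta$ topples site $1$ once, so $m^{w}_{K_1,\eta,\tau}(1)=1$, while stabilising $\eta'$ never topples site $1$ because the arriving particle falls asleep there, so $m^{w'}_{K_2,\eta',\tau}(1)=0$. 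Hence no argument can establish the statement in the generality given; the hypotheses must be restricted. Note that what the paper actually uses is only monotonicity in $K$ for fixed $\eta$ and $w$ (to define $m^w_{\eta,\tau}$ as an increasing limit), and that restricted statement has a proof which uses no ordering of configurations at all: an unstable site remains unstable until it is itself toppled (arrivals at an active site keep it active), so any legal stabilising sequence for $K_2$ must topple every site that a legal stabilising sequence for $K_1$ topples, and one concludes by the exchange-and-induction argument underlying Lemma~\ref{prop:ab}. You should either prove that restricted version or explicitly flag the obstruction above, rather than present the jump-operator case analysis as routine.
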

Lemma~\ref{prop:mono} implies 
that given any sequence $\{K_n\}_n$ that increases to 
$V$, the function defined pointwise by the limit 
$
  m^w_{\eta,\tau} 
:=
  \lim_{n \to \infty} m^w_{K_n,\eta,\tau}
  $
is also well-defined in $\bb N \cup \{\infty\}$.

The stack of instructions so far was taken to be arbitrary, 
but in order to stablish a relation to  our original process
$\eta_t$, we need to sample $\tau$ independently of $\eta_0$.
Let ${\bf P}_\lambda$ a probability measure on $\mc T$, such that
$\tau^{x,j}$ are i.i.d. For each $y \sim x$,  $\tau^{x}_j$ is equal to 
$\tau^{x,y}$ with probability $\frac{1}{d(1+\lambda)}$, and
equal to $\tau^{x,s}$ with probability $\frac{\lambda}{1+\lambda}$.
We will denote the joint law of $w,\eta,\tau$ as
${\bf P}^\nu_{\mu,\lambda}$. 

\begin{lemma}[Zero-One Law] \label{prop:01}
  For any $\lambda,\mu >0$ , we have that 
  \[
    {\bf P}^\nu_{\mu,\lambda}\left( m^w_{\eta,\tau}(0)< \infty\right)=
    {\bb P}^\nu_{\mu,\lambda}\left(\text{The process } \eta_\cdot \text{ fixates} \right)
    \in \{0,1\}.
  \]
\end{lemma}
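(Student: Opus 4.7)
The plan is to split Lemma~\ref{prop:01} into two parts: first the equality ${\bf P}^\nu_{\mu,\lambda}(m^w_{\eta,\tau}(o) < \infty) = {\bb P}^\nu_{\mu,\lambda}(\text{fixation})$, and then the dichotomy showing that this common value lies in $\{0,1\}$.

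For the equality I would realise the Markov chain $\eta_\cdot$ through the Diaconis-Fulton construction: attach independent Poisson clocks of total rate $1+\lambda$ to each vertex, and at each ring at an unstable site $x$ consume the next instruction from the stack $\tau^x$. Since under ${\bf P}_\lambda$ each instruction is \emph{sleep} with probability $\lambda/(1+\lambda)$ or a \emph{jump to a given neighbour} with probability $1/(d(1+\lambda))$, the resulting process has the infinitesimal rates of the continuous-time chain, so its law is that of $\eta_\cdot$ under ${\bb P}^\nu_{\mu,\lambda}$. By the Abelian property (Lemma~\ref{prop:ab}), the total number of instructions consumed at $o$ across all of $[0,\infty)$ equals $m^w_{\eta,\tau}(o)$. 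Fixation of $\eta_\cdot$ amounts to each finite set seeing only finitely many topplings over all time, and so is equivalent to $m^w_{\eta,\tau}(x) < \infty$ for every $x \in V$. To reduce this to the event $\{m^w_{\eta,\tau}(o) < \infty\}$ I would prove a propagation lemma---the small observation alluded to in Remark~\ref{rem:zero-one}.

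The propagation reads: ${\bf P}^\nu_{\mu,\lambda}$-almost surely, if $m^w_{\eta,\tau}(x) = \infty$ for some $x$ then $m^w_{\eta,\tau}(y) = \infty$ for every $y \sim x$. Indeed, in that case the i.i.d.\ structure of $\tau^x$ under ${\bf P}_\lambda$ forces, via Borel-Cantelli, infinitely many of the instructions used at $x$ to be jumps to $y$ (each has positive probability $\frac{1}{d(1+\lambda)}$), so $y$ receives infinitely many arrivals. If $m^w_{\eta,\tau}(y)$ were finite then $y$ would remain stable after its last toppling, and the infinitely many subsequent arrivals could only increase the inactive pile at $y$, whose size is bounded by the almost surely finite $w_y$. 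After at most $w_y$ of those arrivals the pile reaches $w_y$, and the next arrival triggers the avalanche clause of the jump operator, making $y$ unstable and forcing an additional toppling---a contradiction. Iterating along paths and using the connectedness of $G$ extends this to every vertex, so $\{m^w_{\eta,\tau}(o) < \infty\}$ coincides with $\{\text{fixation}\}$ up to null sets.

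The dichotomy then follows from ergodicity: the fixation event, being equivalent to $\{m^w_{\eta,\tau}(x) < \infty \text{ for every } x \in V\}$, is invariant under every $\phi \in \mathrm{Aut}(G)$. Under ${\bf P}^\nu_{\mu,\lambda}$ the data $(w_x, \eta^a(x), \tau^x)_{x \in V}$ are i.i.d.\ along $V$; fixing any countable transitive subgroup $H \le \mathrm{Aut}(G)$, its action on this product is mixing and hence ergodic, so the event has probability in $\{0,1\}$. The main obstacle is really the propagation step with an unbounded environment: the key point is that only the pointwise finiteness $w_y < \infty$ almost surely is used, not a uniform upper bound, and this is automatic from $\nu$ being supported on $\bb N$.
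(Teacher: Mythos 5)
Your proposal is correct and takes essentially the same route as the paper: realise the dynamics via the Diaconis--Fulton stacks, propagate $m^w_{\eta,\tau}=\infty$ from a vertex to its neighbours by Borel--Cantelli using only that $w_y<\infty$ almost surely (exactly the observation of Remark~\ref{rem:zero-one}, there phrased as $\sum_{z\in\gamma}w_z<\infty$ along finite paths), and conclude by ergodicity of the i.i.d.\ data under a transitive automorphism action. The only step you compress is the absence of blow-ups in finite time needed to justify the coupling, which the paper likewise defers to \cite{rolla2012absorbing}.
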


We say that $(\mu,\lambda)$ is in activation phase if 
${\bb P}^\nu_{\mu,\lambda}\left(\text{The process } \eta_\cdot \text{ fixates} \right)=0$,
otherwise, we say that $(\mu,\lambda)$ is in the
fixation phase.

\begin{remark}\label{rem:zero-one}
In this remark we highlight on the small differences between 
the proof of Lemma~\ref{prop:01} in our context and the original 
ARW in \cite{rolla2012absorbing}. In the context of the original ARW,
the proof of the Zero-One Law is split in three parts
\begin{enumerate}
    \item Proving that ${\mc P}^\nu_\lambda\left( m_{\eta,\tau}(0)< \infty\right) \in \{0,1\}$.
    \item Providing a coupling between $ \lim_{t\to \infty} u_t$ and $m_{\eta,\tau}$,
    where $u_t(x)$ is the number of times that particles at the site
    $x$ either sleep or move before the time $t \in \bb R^+$, 
    according to the dynamics induced by ${\bb P}^\nu_{\mu,\lambda}$ 
    \item Showing that there are no blowups at finite time. 
\end{enumerate}

The proofs of steps $2$ and $3$ do not require additional ideas.
The proof of $1$ is also split into multiple parts: \emph{i)} if there exists a
vertex $x$, such that $m_{\eta,\tau}(x)=\infty$, then for any
other vertex $y$ we also have $m_{\eta,\tau}(y)=\infty$, \emph{ii)} use 
that this event is invariant according to the measure of 
${\bf P}^\nu_{\mu,\lambda}$ , and therefore must have
probability $0$ or $1$. 
In our case, even if 
$\bb E_\nu[w_0]= +\infty$,  for any realisation of $w$, any given finite path $\gamma$ that 
connects the sites $x$ and $y$ satisfies $\sum_{z \in \gamma} w_z <\infty$, hence we can still
use a Borel-Cantelli to prove that if $x$ topples infinitely many times, so
does $y$. After that,
we proceed as usual.
\end{remark}

We finish this section by briefly introducing the concept of 
weak stabilisation of a finite subset $K \subset V$ 
with respect to $x \in K$. This is a simple adaptation of 
Definitions 3.2 and 3.3 in \cite{stauffer2018critical}.
Notice that the equivalent notions of Abelian Property and Monotonicity 
also apply in this context whose proofs can also be adapted from 
\cite{stauffer2018critical} and will be omitted.

We say that a configuration $\eta$ is \emph{weakly} stable in a subset $K \subset V$ 
with respect to a vertex $x \in K$ if $\eta^a(x)  \leq w_x$ and 
$\eta^a(y) =0$ for all $y \in K \setminus\{x\}$ . 
For simplicity, we will say that $\eta$ is weakly stable for $(x, K)$.
Moreover, the 
\emph{weak stabilisation} (WS) of $(x, K)$ is a sequence of topplings 
of unstable sites of $K \setminus\{x\}$ and of topplings 
of $x$ whenever $x$ has at least $w_x+1$ active particles, 
until a weakly stable configuration for $(x, K)$ is obtained.
The order of the topplings of a weak stabilisation can be 
arbitrary. 
We will denote by $m_{x,K}(y)$ the number of times 
the site $y$ was toppled during the WS of $(x,K)$. 

\section{Proofs}

\subsection*{Activation for $\mu > \bb E[w_o]$ }

To show Activation, we can then use Theorem~1.1 of \cite{gurel2010fixation}, which studies a class of interacting particle systems for which both the site-wise construction and the particle-wise construction are well-defined.
In this context, the authors show that that site stabilisation implies particle stabilisation.
In simple words, this means that if $(\mu,\lambda)$ is in fixation phase, then each particle only moves for a finite amount of time.
Therefore, we can define the limiting position of a given particle in a measurable manner.
The proof that ARW (which generalises for the ARW with unbounded capacities) belongs to such class of interacting particle systems can be found in \cite[Theorem~6]{rolla2018nonfixation}.

Although the results in \cite{gurel2010fixation} is originally stated for interactions which are automorphism  invariant, one can easily adapt it to include our case in which the activated random walks depend on the environment $w$ which is not automorphism invariant itself, but whose law is.

Now, suppose that $(\mu,\lambda)$ with $\mu > \bb E_\mu[w_o]$
is in the fixation phase 
in sites, then it must also fixate in particles.
We can then define 
\[
  F(x,y):= \bb E_{\mu,\lambda}^\nu[X_{x,y}],
\]
where
\[
  X_{x,y}:=\text{number of particles that start in } x \text{ and fixate at }y.
\]
Notice that $F(x,y)$ is indeed invariant by automorphisms.
Moreover, we have that $ \sum_{y \in V} X_{x,y}= \eta^a_0(x)$ a.s in the event $\{\eta \text{ fixates}\}$ and that $\sum_{y \in V} X_{y,x} \le w_x$, as no more than $w_x$ particles can fixate at the site $x$.
Therefore, a simple application of the mass-transport principle implies that 
\[
  \mu =
  \bb E_{\mu,\lambda}^\nu[\eta_0(x)]
  = \sum_{y \in V} F(x,y)
  = \sum_{y \in V} F(y,x)
  \le 
  \bb E_{\mu,\lambda}^\nu[w_x]
  =
  \bb E_\nu[w_x],
\]
which gives the desired result.
\subsection*{Fixation for $\mu < \frac{\lambda}{1+\lambda}\bb E[w_o]$ }

In this section, we adapt the argument of  \cite{stauffer2018critical} using a particular strategy to show that if $(\mu,\lambda)$ is in the active phase, then $\mu \ge \frac{\lambda}{1+\lambda}\bb E[w_o]$.

For this proof, fix a monotone sequence of finite sets $\Lambda_n \subset V$, such that $\bigcup_{n \ge 1} \Lambda_n = V$ satisfying  $|\partial \Lambda_n|/|\Lambda_n|\to 0$ monotonically, where $\partial K$ denotes the external boundary of $K$.
Moreover, for $r \ge1$, let $\Lambda_{n,r} := \bigcup_{x \in \Lambda_n} B(x,r)$, notice that
\begin{equation}\label{eq:lambdarn}
  1 \le
  \frac{|\Lambda_{n,r}|}{|\Lambda_n|}
  \le
  1 + d^r \frac{| \partial \Lambda_n|}{ |\Lambda_n|},
\end{equation}
where $d$ is the degree of the vertices of $V$.

The objective is to prove that if we assume that $(\mu,\lambda)$ is in the activation phase, there exists a particular toppling strategy that allows us to obtain a lower bound on how many particles are inactive in the set $\Lambda_{n,r} $ after its stabilisation for $n$ sufficiently large.

This strategy is to stabilise  $\Lambda_n$ by performing a weak stabilisation of $(x,\Lambda_n)$ followed by a toppling of $x$, if such configuration is not stable in $\Lambda_n$, we repeat the previous two steps until we stabilise $\Lambda_n$.
This strategy for stabilisation will be referred to as \textit{stabilisation via weak stabilisation}.

In order to analyse such strategy, it is useful to understand how many 
particles are in the site $x$ after the weak stabilisation $(x,K)$. 
Therefore, we introduce the \emph{saturation event}.
For $K \subset V$ finite, $x \in K$. We define the event $S(x,K)$, 
\[
  S(x,K):= \left[\substack{\text{ After performing weak stabilisation }(x,K), 
  \\ \text{ we have exactly } w_x  \text{ active particles at }x}\right].
\]
We refer to the occurrence of the event $S(x,K)$ as \textit{the weak stabilisation $(x,K)$ saturates the site $x$}.

\begin{lemma}\label{lem:Sat-x}
  For $x \in K_1 \subset K_2$,  we get 
  \[
    S(x,K_1)
    \subset
    S(x,K_2).
  \]
  In addition, if $(\mu,\lambda)$ is in the activation phase, for each 
  $x \in V$, we have
  \[
    \lim_{r\to \infty}{\bf   P}^\nu_{\mu,\lambda}\Big(S(x,B(x,r))\Big)=1.
  \]
  Moreover, the convergence above is uniform in $x$. 
\end{lemma}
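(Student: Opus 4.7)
The plan is to prove both parts by combining the Abelian and monotonicity properties of weak stabilisation (the natural analogues of Lemmas~\ref{prop:ab} and~\ref{prop:mono}, imported from \cite{stauffer2018critical}) with the Zero-One Law of Lemma~\ref{prop:01}.

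For the inclusion $S(x,K_1)\subset S(x,K_2)$, I would realise the WS of $(x,K_2)$ in two stages via the Abelian property: first run the WS of $(x,K_1)$, then perform the remaining topplings of unstable sites in $K_2\setminus K_1$ (toppling $x$ whenever $\eta^a(x)\geq w_x+1$) until a weakly stable configuration for $(x,K_2)$ is obtained. On $S(x,K_1)$ the first stage ends with $\eta^a(x)=w_x$. The core observation is that once $\eta^a(x)\geq w_x$, this remains true throughout the second stage: jumps from neighbours only increase the active count, while a toppling at $x$ (permitted only when $\eta^a(x)\geq w_x+1$) either decreases the count by one via a jump (leaving it $\geq w_x$) or leaves it unchanged via a sleep (acting on $\eta^a(x)>w_x$). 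Combined with the weak-stability bound $\eta^a(x)\leq w_x$ at termination, this forces $\eta^a(x)=w_x$, i.e.\ $S(x,K_2)$.

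For the convergence I would argue by contradiction, assuming there exist $\varepsilon>0$ and $r_n\uparrow\infty$ with $\mathbf{P}^\nu_{\mu,\lambda}(S(x,B(x,r_n))^c)\geq\varepsilon$. The key preliminary observation is that on $S(x,K)^c$ the count $\eta^a(x)$ never reaches $w_x$ (and \textit{a fortiori} never reaches $w_x+1$), so $x$ is never toppled during WS. By the Abelian property one may execute WS of $(x,K)$ in the order that always topples unstable sites of $K\setminus\{x\}$ whenever possible and only topples $x$ once $K\setminus\{x\}$ is temporarily stable with $\eta^a(x)\geq w_x+1$. In this execution the instruction $\tau^x_1$ is never consulted on $S(x,K)^c$, so that event is measurable with respect to $(w,\eta_0,\{\tau^y_j\}_{y\neq x})$ and is therefore independent of $\tau^x_1$.

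Next I would exhibit the following explicit stabilisation of $B(x,r_n)$: perform WS of $(x,B(x,r_n))$ and, if $x$ is still unstable, topple it once. On the joint event $\{S(x,B(x,r_n))^c\}\cap\{\tau^x_1=\tau^{x,s}\}$ this single extra toppling is a sleep instruction acting on $\eta^a(x)\leq w_x$, turning all remaining active particles at $x$ inactive and yielding a configuration stable on $B(x,r_n)$. Hence by Lemma~\ref{prop:ab}, $m^w_{B(x,r_n),\eta,\tau}(x)\leq 1$ on this joint event, and combining with the independence above gives
\[
  \mathbf{P}^\nu_{\mu,\lambda}\!\left(m^w_{B(x,r_n),\eta,\tau}(x)\leq 1\right)
  \geq \frac{\lambda}{1+\lambda}\,\mathbf{P}^\nu_{\mu,\lambda}\!\left(S(x,B(x,r_n))^c\right)
  \geq \frac{\lambda\,\varepsilon}{1+\lambda}.
\]
On the other hand, Lemmas~\ref{prop:mono} and~\ref{prop:01} together with the activation hypothesis yield $m^w_{B(x,r_n),\eta,\tau}(x)\to\infty$ almost surely as $n\to\infty$, forcing the left-hand side to zero, a contradiction. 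Uniformity in $x$ then follows from vertex-transitivity of $G$ and the translation invariance of the joint law of $(w,\eta_0,\tau)$. The subtlest step I anticipate is the rigorous justification of the independence claim --- essentially, checking that the priority-to-$K\setminus\{x\}$ execution of WS terminates while consulting only instructions at sites $y\neq x$; once this is in place, the rest is just the ``stabilisation via weak stabilisation'' mechanism of \cite{stauffer2018critical}, specialised to a single terminal sleep at $x$.
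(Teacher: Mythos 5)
Your proof is correct and follows essentially the same route as the paper's: the same two-stage Abelian decomposition (with the persistence of $\eta^a(x)\ge w_x$) for the inclusion, and the same contradiction via a single terminal sleep at $x$ showing $m_{B(x,r)}(x)\le 1$ with probability at least $\tfrac{\lambda}{1+\lambda}\,{\bf P}(S^c)$, which is incompatible with the activation phase. The only cosmetic difference is that you condition on $\tau^x_1$ directly (justified by noting no instruction at $x$ is consumed on $S(x,K)^c$), whereas the paper phrases the same independence via the first \emph{unused} instruction $\tau^*$ at $x$.
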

\begin{proof}
Let $\eta \in \Omega$, as the Abelian property still holds in the weak stabilisation setting, we can perform the WS in $(x,K_2)$ by first performing WS in $(x,K_1)$.
Denote by $\eta_1,\eta_2$, respectively, the configuration resulting from weakly stabilising $\eta$  in $(x,K_1)$ and $(x,K_2)$.
Note that, by the definition of weak stabilisation, if $S(x, K_1)$  holds, then $\eta^a_1(x) = w_x$.
This implies that  during the remaining topplings necessary to weakly stabilise $K_2$, site $x$  will not be toppled if it contains exactly $w_x$ active particles, implying that $\eta^a_2(x) \geq w_x$  and that $S(x, K_2)$  holds.

Now, we move to the second statement.
To simplify notation, we use that the event $S(x,B(x,r))$ is isomorphism invariant, so it is enough to examine the probability of $S(r):=S(o,B(o,r))$.

The first part of the proof implies that the limiting event $S(o,V):= \bigcup_{r \ge1}S(r)$ satisfies ${\bf P}^\nu_{\mu,\lambda} (S(o,V))= \lim_{r \to \infty}{\bf P}^\nu_{\mu,\lambda}(S(r))$.
Let 
\begin{equation}\label{eq:delta}
  \delta:={\bf P}^\nu_{\mu,\lambda}(S(o,V)^c),
\end{equation}
we will prove that $\delta=0$. 

Denote by $\tau^{*} \in \{\tau^{o,y}, y \sim o\}\cup \{\tau^{o,s}\}$ the first instruction in the stack of $o$ that was not used by the WS of $(o,B(o,r))$.
Due to  the monotonicity of $S(r)$, and the independence between $S(r)$ and $\tau^{*}$, we have that for all $r$ 
 \begin{equation}\label{eq:Bad-Event}
  \frac{\delta\lambda}{1+\lambda} 
  \le
  {\bf P}^\nu_{\mu,\lambda}\Big((S(r))^c \cap [\tau^{*}=\tau^{o,s}]\Big) 
  \le  {\bf P}^\nu_{\mu,\lambda}(A(r)),
 \end{equation}
where $A(r):=\left[m_{B(o,r)}(o) \le 1\right]$.
Indeed, if $S(r)$ does not occur, then $o$ does not accumulate more than $w_o$ particles before achieving the WS of $(o,B(o,r))$, so no toppling of $o$ is performed during this process.
At this point, all sites $B(o,r)\setminus \{o\}$ are stable.
If $\tau^*=\tau^{o,s}$, we have that the stabilisation via weak stabilisation only topples the site $o$ once.

 As the random variable $m_{B(o,r)}(o)$ is monotone in $r$, we have that 
 \[
  \lim_{r\to \infty} {\bf P}^\nu_{\mu,\lambda} (A(r))
  =
  {\bf P}^\nu_{\mu,\lambda} ([m_V(o)\le 1])=0,
 \]
where in the last equality, we used that $(\mu,\lambda)$ is in the activation phase.
However, using \eqref{eq:Bad-Event} and $\lambda >0$ , we get that $\delta=0$, which concludes the proof.
\end{proof}

The saturation event is useful to give a lower bound on the 
quantity of inactive particles that end in the site $x$. Indeed,
consider the quantity
\[
  Q(x,K)= Q(x,K,\mu,\lambda,\nu):=
  {\bf E}^{\nu}_{\mu,\lambda}\left[\eta^i_{\tau_{K}}(x) \right],
\]
where $\eta_{\tau_{K}}$ is the configuration obtained by stabilising 
$\eta$ in $K$.

We can then apply stabilisation via weak stabilisation to get 
the following lower bound.
\begin{lemma}\label{lem:NumberSleepPart}
  For any translation invariant, ergodic measure $\nu$ in $V^{\bb N}$ such that $\bb E_\nu[w_o]<\infty$,
  we have that 
  \begin{equation}\label{eq:NumberSleepPart}
    Q(x,K)
    \ge
   \frac{\lambda}{1+\lambda} {\bf E}^\nu_{\mu,\lambda}\Big[w_x\cdot {\bf 1}[S(x,K)]\Big].
  \end{equation}
\end{lemma}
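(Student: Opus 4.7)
The plan is to construct a specific legal toppling sequence that stabilises $\eta$ in $K$ on a favourable event, and then to invoke the Abelian property (which extends to the weak-stabilisation setting, as mentioned before the statement) to read off the final configuration $\eta_{\tau_K}$ from this one explicit sequence. The favourable event will be the one on which the first ``cycle'' of the stabilisation-via-weak-stabilisation strategy already produces a configuration that is stable in $K$.

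Concretely, let $\tau^\ast \in \{\tau^{x,s}\} \cup \{\tau^{x,y} : y \sim x\}$ denote the first instruction in the stack at $x$ that is not consumed by the weak stabilisation of $(x,K)$, in direct analogy with the use of $\tau^\ast$ in the proof of Lemma~\ref{lem:Sat-x}. On the event $S(x,K)$, after performing the WS of $(x,K)$ the site $x$ carries exactly $w_x$ active particles, while every other site of $K$ is stable. If in addition $\tau^\ast = \tau^{x,s}$, then the next toppling at $x$ uses this sleep instruction; because $\eta^a(x) = w_x \le w_x$ satisfies the condition of the sleep operator, all $w_x$ particles at $x$ become inactive simultaneously, and the configuration becomes stable on all of $K$. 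The resulting sequence is therefore a legal stabilisation of $K$, so by the Abelian property (Lemma~\ref{prop:ab})
\[
  \eta^i_{\tau_K}(x) \;\ge\; w_x \cdot {\bf 1}[S(x,K)] \cdot {\bf 1}[\tau^\ast = \tau^{x,s}],
\]
the inequality being trivial outside the indicator's support.

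Taking expectations, it remains to factorise the indicator of $\{\tau^\ast = \tau^{x,s}\}$. The number $N$ of instructions at $x$ consumed by the WS of $(x,K)$ is a stopping time with respect to the filtration $\{\mathcal F_n\}_{n\ge 0}$ generated by the environment $w$, the initial configuration, the full stacks at sites $y \neq x$, and the first $n$ instructions of the stack at $x$. Both $w_x$ and ${\bf 1}[S(x,K)]$ are $\mathcal F_N$-measurable, while $\tau^\ast$ is the $(N+1)$-st instruction in the stack at $x$ and is therefore independent of $\mathcal F_N$, distributed like a generic entry of that stack. Since $\pr{\tau^\ast = \tau^{x,s}} = \lambda/(1+\lambda)$, combining with the previous display gives precisely \eqref{eq:NumberSleepPart}. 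The only subtlety in writing out the details is this stopping-time independence, which is a routine consequence of the i.i.d.\ structure of the stacks; the rest is a direct bookkeeping of a single WS-plus-one-toppling cycle, and the hypothesis that $\nu$ is translation invariant, ergodic, and has finite mean plays no role at this stage (it will be used downstream when Lemma~\ref{lem:NumberSleepPart} is averaged over $\Lambda_{n,r}$).
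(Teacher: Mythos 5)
Your proposal is correct and follows essentially the same route as the paper's proof: restrict to the event $S(x,K)$, observe that if the next instruction at $x$ after the weak stabilisation is a sleep instruction then the configuration stabilises in $K$ with $w_x$ inactive particles at $x$, and use the independence of that fresh instruction (probability $\lambda/(1+\lambda)$ of being $\tau^{x,s}$) from the outcome of the WS. Your explicit appeal to the Abelian property and the stopping-time formulation of the independence are just more detailed renderings of steps the paper leaves implicit.
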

The advantage of this lower bound is that, although $Q(x,K)$ is not monotone in $K$, the event $S(x,K)$ is.

\begin{proof}
We have that 
\[
Q(x,K)\ge
{\bf E}^{\nu}_{\mu,\lambda}\left[\eta^i_{\tau_{K}}(x) {\bf 1}[S(x,K)]\right].
\]
Notice that, in the event $S(x,K)$, after performing WS of $\eta$ in $(x,K)$, there are  $w_x$ particles at the site $x$.
Write $\tau^{x,*}$ to denote the first instruction at $x$ after performing WS in $(x,K)$.

If $S(x,K)$ occurs and $\tau^{x,*}=\tau^{x,s}$, the configuration $\eta$ will stabilise with exactly $w_x$ particles at the site $x$.
Again, as $\tau^{x,*}$ is independent of the state obtained by the WS of $\eta$ in $(x,K)$, we conclude the argument.
\end{proof}

Now, we assume that $(\mu,\lambda)$ with $\lambda>0$ is in activation phase.
\begin{equation}\label{eq:fix-r}
  {\bf E}^\nu_{\mu,\lambda}\Big[w_x\cdot {\bf 1}[S(x,B(x,r))]\Big]
  \ge
  \alpha \bb E [w_0].
\end{equation}

In the following, we write $\eta^a(K):= \sum_{x \in K} \eta^a(x)$ and $\eta^i(K):= \sum_{x \in K} \eta^i(x)$.
Notice that a.s, we have that $\eta^a_{0}(K)\ge \eta^i_{\tau_{K}}(K)$ for any finite set $K$ .
Therefore,
\begin{equation}\label{eq:fix-1}
  {\bf E}^\nu_{\mu,\lambda}\left[\eta^a_{0}\left(\Lambda_{n,r}\right)\right]
  \ge 
  {\bf E}^\nu_{\mu,\lambda}\left[\eta^i_{\tau_{\Lambda_{n,r}}}\left(\Lambda_{n,r}\right)\right].
\end{equation}
We will estimate both sides of \eqref{eq:fix-1}. 
We start with the right-hand side,
\begin{align*}
  {\bf E}^\nu_{\mu,\lambda}\left[\eta^i_{\tau_{\Lambda_{n,r}}}(\Lambda_{n,r})\right]
  \ge
  {\bf E}^\nu_{\mu,\lambda}\left[\eta^i_{\tau_{\Lambda_{n,r}}}\left(\Lambda_{n}\right)\right]
   &= 
   \sum_{x \in \Lambda_{n}}
  {\bf E}^\nu_{\mu,\lambda}\left[\eta^i_{\tau_{\Lambda_{n,r}}}(x)\right]
  \\ &= 
  \sum_{x \in \Lambda_{n}}
  Q\left(x,\Lambda_{n,r}\right).
\end{align*}
Using the isomorphism invariance, \eqref{eq:NumberSleepPart}, and \eqref{eq:fix-r} 
we get that 
\begin{align}\label{eq:fix-2}
  {\bf E}^\nu_{\mu,\lambda}\left[\eta^i_{\tau_{\Lambda_{n,r}}}(\Lambda_n)\right]
  &\ge
  \frac{\lambda}{1+\lambda}
  \sum_{x \in \Lambda_{n}}{\bf E}^{\nu}_{\mu,\lambda} \left[w_x {\bf 1}[S(x,\Lambda_{n,r})]\right]
  \\ \nonumber &\ge
  \frac{\lambda}{1+\lambda}
  \sum_{x \in \Lambda_{n}}  {\bf E}^{\nu}_{\mu,\lambda}\left[w_x {\bf 1}[S(x,B(x,r))]\right]
  \ge
  |\Lambda_{n}|\frac{\lambda}{1+\lambda} \alpha \bb E[w_0] 
\end{align}
Applying \eqref{eq:fix-2} in \eqref{eq:fix-1}, and using that the left-hand side
of \eqref{eq:fix-1} is equal to $\mu |\Lambda_{n,r}|$, we get
\[
  \mu \ge \frac{|\Lambda_{n}|}{|\Lambda_{n,r}|} \alpha \bb E[w_0].
\]
Finally, by taking $n \to \infty$ and using \eqref{eq:lambdarn}, we get
\[
  \mu \ge \alpha \frac{\lambda}{1+\lambda} \bb E[w_0].
\]
By taking $\alpha \rightarrow 1^{-}$ we conclude the proof.

\bibliographystyle{abbrv}
\bibliography{library.bib}

\end{document}